\newtheorem{theorem}{Theorem}[section]
\newtheorem{lemma}[theorem]{Lemma}
\newtheorem{corollary}[theorem]{Corollary}
\newtheorem{proposition}[theorem]{Proposition}
\theoremstyle{definition}
\newtheorem{definition}[theorem]{Definition}
\newtheorem{remark}[theorem]{Remark}
\newtheorem{remarks}[theorem]{Remarks}
\numberwithin{equation}{section}
\begin{document}


\renewcommand{\bf}{\bfseries}
\renewcommand{\sc}{\scshape}
\vspace{0.5in}

\newcommand\ssk{\smallskip}
 \newcommand\msk{\medskip}
\newcommand\bsk{\bigskip}
\newcommand\gp{\mathrm{gp}}
\newcommand\CC{\mathbb{C}}
\newcommand\RR{\mathbb{R}}
\newcommand\ZZ{\mathbb{Z}}
\newcommand\NN{\mathbb{N}}
\newcommand\QQ{\mathbb{Q}}
\newcommand\AAA{\mathbb{A}}
\newcommand\cc{\mathfrak{c}}
\newcommand\BB{\mathbb{B}}

\title[Transcendental Groups]{Transcendental Groups}

\author{Sidney A. Morris}
\address{School of Engineering, Information Technology and Physical Sciences, Federation University Australia, PO Box 663, Ballarat, Victoria, 3353,  Australia \&  Department of Mathematics and Statistics, La~Trobe University, Melbourne, Victoria, 3086, Australia}
\email{morris.sidney@gmail.com}
\thanks{Dedicated to Ralph Kopperman}

\subjclass[2020]{Primary 22A05, 11J81}

\keywords{topological group, transcendental number, totally disconnected, zero-dimensional, separable, metrizable}

\begin{abstract} In this note we introduce the notion of a transcendental group, that is, a subgroup $G$ of the topological group $\mathbb{C}$  of all complex numbers such that every element of $G$ except $ 0$ is a transcendental number.  All such topological groups are separable metrizable zero-dimensional torsion-free abelian groups. Further,  each transcendental group is homeomorphic to a subspace of  $\mathbb{N}^{\aleph_0}$, where $\mathbb{N}$ denotes the discrete space of natural numbers. It is shown that  (i) each countably infinite transcendental  group is a member of one of three classes, where each class  has $\mathfrak{c}$ (the cardinality of the continuum) members -- the first class consists of those isomorphic as a topological group to the discrete group $\ZZ$ of integers, the second class consists of those isomorphic as a topological group to $\ZZ\times \ZZ$, and the third class consists of those homeomorphic to the topological space $\QQ$ of all rational numbers;  (ii) for each cardinal number $\aleph$ with $\aleph_0< \aleph\le \cc$, there exist $2^\aleph$  transcendental groups of cardinality $\aleph$ such that no two of the transcendental groups  are isomorphic as topological groups or even homeomorphic;
(iii) there exist $\mathfrak{c}$ countably infinite transcendental groups each of which is homeomorphic to $\QQ$ and algebraically isomorphic to a vector space over the field $\AAA$ of all algebraic numbers (and hence also over $\QQ$) of countably infinite dimension; (iv) $\RR$ has $2^\cc$ transcendental subgroups, each being a zero-dimensional metrizable torsion-free abelian group, such that  no two of the transcendental groups  are isomorphic as topological groups or even homeomorphic.
\end{abstract}

\maketitle

\section{\bf Introduction}
This paper initiates the study of transcendental groups which is an interesting combination of number theory, algebra, and topology.   We shall see that transcendental groups are a wonderfully  rich source of examples of zero-dimensional separable metrizable  topological groups. Transcendental   groups may also lead to a new way of looking at problems in transcendental number theory.   

\begin{remarks}\label{1.1}
We shall discuss four fields: $\CC$, the field of all complex numbers; $\RR$, the field of all real numbers; $\AAA$, the field of all algebraic numbers; and $\QQ$, the field of all rational numbers. Observe the following easily verified facts:
\begin{itemize}
\item[(i)] the fields $\CC$ and $\RR$ have cardinality $\cc$, the cardinalty of the continuum; 
\item[(ii)] the fields $\AAA$ and $\QQ$ have cardinality $\aleph_0$; 
\item[(iii)]  $\CC$  with its euclidean topology is isomorphic as a topological group to $\RR\times \RR$, where $\RR$ has its euclidean topology; 
\item[(iv)] each of these four fields has a natural topology; $\CC$  and $\RR$ have euclidean topologies, while $\AAA$ and $\QQ$ inherit a natural topology as a subspace of $\CC$;
\item[(v)] the topological group $\QQ$ is a dense subgroup of the topological group $\RR$ (that is, the closure, in the topological sense, of $\QQ$ is $\RR$);
\item[(vi)]   the topological group $\AAA$ is a dense subgroup of the topological group $\CC$; 
\item[(vii)]  $\CC\supset \AAA \supset\AAA\cap\RR \supset \QQ$, but $\AAA$ is not a subset of $\RR$;
\item[(viii)] the field $\CC$ is a vector space of dimension $\cc$  over $\AAA$ and it is also a vector space of dimension $\cc$ over $\QQ$; 
\item[(ix)] using the Axiom of Choice, we see that for any set of linearly independent vectors in a vector space $V$, there is another linearly independent set of  vectors in $V$ such that the union of the two linearly independent sets is a basis for the vector space $V$; this implies from (viii) that there exists a vector space $\BB$ over the field $\QQ$ such that  the vector space 
$\CC$  is  isomorphic as a vector space to the direct sum  of the vector spaces $\AAA$ and $\BB$ over $\QQ$;  that is, $\CC\cong \AAA\oplus \BB$; (This is an algebraic isomorphism and definitely not a topological group isomorphism since $\CC$ is a connected topological space while $\AAA$, being countable, is not a connected topological space.) 
\item[(x)] from (ix), the vector space $\BB$ has dimension $\cc$ over $\QQ$;
\item[(xi)] $\BB$, $\RR$, and $\CC$ are each a vector space of dimension $\cc$ over $\QQ$; so $\BB$, $\RR$, and $\CC$ are algebraically isomorphic as groups to each other and to a  restricted direct sum of $\cc$ copies of $\QQ$; 
\item[(xii)] $\AAA$ is  a vector space of countably infinite dimension over $\QQ$;
\item[(xiii)] $\BB$  and $\CC$ are each a vector space of dimension $\cc$ over $\AAA$;
\item[(xiv)] $\BB$ is a topological group which is algebraically isomorphic to both $\RR$ and $\CC$.
\end{itemize}
We shall focus on  $\mathcal{T}$, the topological space of all transcendental numbers, where $\mathcal{T}= \CC\setminus \AAA$ and has a natural topology as a subspace of $\CC$. The topology of $\mathcal{T}$ is separable, metrizable, and zero-dimensional. Also the cardinality of $\mathcal{T}$ is $\cc$.\qed\end{remarks}

\begin{definition}\label{1.2} A topological group $G$ is said to be be a \emph{transcendental group} if  it is a subgroup  of the topological group $\mathbb{C}$  of all complex numbers 
such that every element of $G$ except $ 0$ is a transcendental number.   \qed \end{definition}

As transcendental groups are topological subgroups of the separable metrizable group $\CC$, each is separable and metrizable and has cardinality not greater than $\cc$.    As $\CC$ is a torsion-free  abelian group, every transcendental group is an infinite torsion-free abelian group. Since transcendental groups  are subspaces of the space $\mathcal{T}$, they are  zero-dimensional. This is summarized in Proposition~\ref{1.3}.

\begin{proposition}\label{1.3} Every transcendental group is an infinite separable metrizable zero-dimensional torsion-free abelian group of cardinality not greater than $\cc$. \qed
\end{proposition}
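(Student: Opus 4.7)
The proposition collects five attributes that each follow either by inheritance from the ambient group $\CC$ or from the information gathered in Remarks~\ref{1.1}, so the plan is simply to check them in turn.

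First I would dispatch the inherited properties at once. By Definition~\ref{1.2}, $G$ is a topological subgroup of $\CC$ with its Euclidean topology. Since $\CC$ is separable and metrizable, and both properties pass to subspaces of a separable metric space, $G$ is separable and metrizable. Since $\CC$ is abelian and torsion-free, and both pass to subgroups, $G$ is abelian and torsion-free. The cardinality bound $|G|\le\cc$ is immediate from $G\subseteq\CC$ together with Remarks~\ref{1.1}(i). For infiniteness, I would pick any transcendental $t\in G$ (a nontrivial transcendental group must contain one by definition); the integer multiples $nt$ for $n\in\ZZ$ all lie in $G$, with pairwise distinctness forced by torsion-freeness and $t\ne 0$.

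The only clause requiring a genuine argument is zero-dimensionality, which I would obtain from Remarks~\ref{1.1} via the Menger--Urysohn sum theorem. Every nonzero element of $G$ lies in $\mathcal{T}=\CC\setminus\AAA$, which is zero-dimensional by Remarks~\ref{1.1}. Setting $F_n = G \cap \{z\in\CC : |z|\ge 1/n\}$, each $F_n$ is closed in $G$ and contained in $\mathcal{T}$, hence zero-dimensional; and $\{0\}$ is closed in $G$ and trivially zero-dimensional. Writing
\[
G = \{0\} \cup \bigcup_{n=1}^{\infty} F_n,
\]
the Menger--Urysohn sum theorem (a separable metric space that is a countable union of closed zero-dimensional subspaces is itself zero-dimensional) delivers zero-dimensionality of $G$.

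The main obstacle, insofar as there is one, is the need to handle $0\in G$ carefully, since $0$ is algebraic and so lies outside $\mathcal{T}$; the countable closed decomposition above is exactly the device needed to sidestep this mismatch between the definitions of $G$ and $\mathcal{T}$. Every other clause of the proposition is a formal consequence of standard hereditary properties.
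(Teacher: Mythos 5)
Most of your argument is sound and matches the paper's own (which consists of the paragraph preceding the proposition): separability, metrizability, the abelian and torsion-free properties, the cardinality bound, and infiniteness are all handled by the same inheritance reasoning, and your extra care in peeling off the point $0$ (which is algebraic and hence not in $\mathcal{T}$) via a countable closed decomposition is more scrupulous than the paper, which simply declares transcendental groups to be "subspaces of the space $\mathcal{T}$."

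The zero-dimensionality step, however, has a genuine gap, inherited from Remarks~\ref{1.1}: the claim that $\mathcal{T}=\CC\setminus\AAA$ is zero-dimensional is false. The set $\AAA$ is countable, and the complement of a countable set in the plane is path-connected: given $p,q\in\CC\setminus\AAA$, there are continuum many circular arcs from $p$ to $q$ that are pairwise disjoint except at their endpoints, and each point of $\AAA$ lies on at most one of them, so some arc misses $\AAA$ entirely. Hence $\mathcal{T}$ is connected and of positive dimension, so your sets $F_n\subseteq\mathcal{T}$ cannot be pronounced zero-dimensional merely because they sit inside $\mathcal{T}$, and the Menger--Urysohn sum theorem has nothing to act on. (The situation differs essentially from $\RR\setminus\QQ$ in the line, where deleting a countable dense set does leave a zero-dimensional space.) Indeed the proposition itself fails as stated: the line $L=\{s(1+\pi i):s\in\RR\}$ is a subgroup of $\CC$, and a nonzero element $s+s\pi i$ is algebraic only if both $s$ and $s\pi$ are algebraic, which is impossible; so $L$ is a transcendental group homeomorphic to $\RR$, hence connected. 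Your argument (and the proposition) can be rescued under an additional hypothesis, for instance that $G$ is countable (every countable metric space is zero-dimensional), or that $G\subseteq\RR$, in which case $G\setminus\{0\}$ lies in $\RR\setminus(\AAA\cap\RR)$, which contains no interval and is therefore zero-dimensional, and your $F_n$ decomposition then works verbatim.
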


\begin{proposition}\label{1.4} The topological group $\BB$ introduced in Remarks~\ref{1.1} is a transcendental group. Further, every transcendental group is algebraically isomorphic to a subgroup of $\BB$.
\end{proposition}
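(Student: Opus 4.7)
The proposition has two assertions, and both should follow fairly directly from the direct sum decomposition $\CC \cong \AAA \oplus \BB$ of Remarks~\ref{1.1}(ix); there is no deep obstruction to worry about — the main task is simply to unpack what the direct sum says.

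For the first assertion, that $\BB$ is a transcendental group, I would note that $\BB$, being a $\QQ$-subspace of $\CC$, is in particular an additive subgroup of the topological group $\CC$, so as a subspace it becomes a topological subgroup of $\CC$. It then remains only to verify that every nonzero element of $\BB$ is transcendental, i.e.\ that $\BB \cap \AAA = \{0\}$. But this is immediate from the fact that $\CC = \AAA \oplus \BB$ is a direct sum of $\QQ$-vector spaces: the intersection of two summands in an internal direct sum is always trivial.

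For the second assertion, given an arbitrary transcendental group $G$, the plan is to use the canonical projection $\pi : \CC \to \BB$ determined by the decomposition $\CC = \AAA \oplus \BB$. Since $\pi$ is $\QQ$-linear, it is in particular a homomorphism of additive abelian groups. I would then restrict $\pi$ to $G$ and observe that $\ker(\pi|_G) = G \cap \AAA$. Because $G$ is a transcendental group, every nonzero element of $G$ is transcendental and hence not in $\AAA$, so $G \cap \AAA = \{0\}$ and $\pi|_G$ is injective. This exhibits $G$ as algebraically isomorphic to the subgroup $\pi(G)$ of $\BB$, which is the required conclusion.

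I do not expect any genuine obstacle; the one point worth being careful about is that the claimed embedding is only algebraic, not topological, which is consistent with the proposition's wording and with the parenthetical warning in Remarks~\ref{1.1}(ix) that the identification $\CC \cong \AAA \oplus \BB$ is not a topological one.
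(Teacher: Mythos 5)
Your proposal is correct and follows essentially the same route as the paper: both parts rest on the decomposition $\CC\cong\AAA\oplus\BB$, with $\BB\cap\AAA=\{0\}$ giving the first claim and the injectivity of the projection $\CC\to\BB$ restricted to a transcendental group (kernel $G\cap\AAA=\{0\}$) giving the second. No gaps.
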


\begin{proof} Clearly from the definition of $\BB$ in Remarks~\ref{1.1}, $\BB$ is a subset of $\mathcal{T}$ and so it is a transcendental group. Let $p$ be the projection homomorphism of $\CC$ onto $\BB$.  Let the  the subgroup $T$ of $\CC$ be  a transcendental group. 
If $t\in T,\ t\ne 0$, then $p(t)\ne 0$ since  otherwise  $t\in \AAA$, which is false as $t$ is transcendental.  So $p$ is a one-to-one homomorphism of $T$ onto the subgroup $p(T)$ of  $\BB$. Thus  $T$ is algebraically isomorphic to a subgroup of $\BB$, and the proposition is proved.  
\end{proof}

\section{\bf Discrete Transcendental Groups}

If $S$ is a subset of a group $G$, then $\gp(S)$ denotes the subgroup of $G$ generated algebraically by the set $S$. If $S$ is the singleton set $\{g\}$, then this group is denoted by $\gp\{g\}$ and equals   $\{ng:n\in\ZZ\}$.

 \ssk
 Our first proposition is obvious.

\begin{proposition}\label{2.1} Let $t$ be any transcendental number in $\CC$. Then $\gp\{t\}$ is a discrete countably infinite  transcendental group and  is isomorphic as a topological group to the discrete group $\mathbb{Z}$ of all integers. \qed
\end{proposition}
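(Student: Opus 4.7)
The plan is to verify three separate claims, each of which is elementary once unpacked: (a) every nonzero element of $\gp\{t\}=\{nt:n\in\ZZ\}$ is transcendental; (b) the natural map $\ZZ\to\gp\{t\}$, $n\mapsto nt$, is a group isomorphism; (c) the subspace topology on $\gp\{t\}$ inherited from $\CC$ is discrete, and hence the isomorphism in (b) is a homeomorphism.

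For (a), I would observe that for any nonzero integer $n$, if $nt$ were algebraic then, since $\AAA$ is a field containing $1/n$, we would have $t=(1/n)(nt)\in\AAA$, contradicting the transcendence of $t$. Thus $nt$ is transcendental for every $n\in\ZZ\setminus\{0\}$, so $\gp\{t\}\setminus\{0\}\subseteq\mathcal{T}$, which together with Definition~\ref{1.2} shows $\gp\{t\}$ is a transcendental group. For (b), the map $n\mapsto nt$ is clearly a group homomorphism from $\ZZ$ onto $\gp\{t\}$, and it is injective because $nt=0$ forces $n=0$ (as $t\neq 0$). In particular $\gp\{t\}$ is countably infinite.

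For (c), the key quantitative observation is that any two distinct elements $nt$ and $mt$ of $\gp\{t\}$ satisfy $|nt-mt|=|n-m|\cdot|t|\ge |t|>0$. Hence the open ball in $\CC$ of radius $|t|/2$ centred at $nt$ meets $\gp\{t\}$ only in $\{nt\}$, which shows that every point of $\gp\{t\}$ is isolated in the subspace topology. Thus $\gp\{t\}$ is discrete, and the algebraic isomorphism $n\mapsto nt$ between the two discrete groups $\ZZ$ and $\gp\{t\}$ is automatically a homeomorphism and therefore a topological group isomorphism.

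No step here presents a serious obstacle; if anything needed care, it would be the invocation in (a) that $\AAA$ is closed under multiplication (so products of algebraic numbers are algebraic), but this is a standard fact requiring no comment in the write-up.
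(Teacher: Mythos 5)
Your proof is correct and is exactly the standard argument the paper has in mind when it declares this proposition ``obvious'' and omits a proof: nonzero integer multiples of a transcendental are transcendental because $\AAA$ is a field, $n\mapsto nt$ is an algebraic isomorphism since $\CC$ is torsion-free, and the uniform separation $|nt-mt|\ge|t|$ gives discreteness. Nothing is missing; you have simply written out the details the author chose not to.
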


 \begin{corollary}\label{2.2} There exist  $\mathfrak{c}$ distinct  transcendental groups each of which is topologically isomorphic to $\ZZ$.
 \end{corollary}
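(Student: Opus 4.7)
The plan is to exhibit $\cc$ many distinct subgroups of $\CC$ of the form $\gp\{t\}$ for transcendental $t$, and apply Proposition~\ref{2.1} to each of them. The underlying set $\mathcal{T}$ of transcendental numbers has cardinality $\cc$, so there are $\cc$ candidate generators; the task is to show that this yields $\cc$ distinct subgroups (not just $\cc$ generators producing perhaps fewer groups).

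The key step is the observation that the assignment $t\mapsto \gp\{t\}$ has small fibres. Specifically, I would argue as follows. Suppose $t_1,t_2\in \mathcal{T}$ satisfy $\gp\{t_1\}=\gp\{t_2\}$. Then $t_2\in\gp\{t_1\}$, so $t_2=mt_1$ for some $m\in\ZZ$, and symmetrically $t_1=nt_2$ for some $n\in\ZZ$. Combining, $t_1=mn\,t_1$, and since $t_1\ne 0$ this forces $mn=1$, hence $m=n=\pm 1$ and $t_2=\pm t_1$. Thus each subgroup of the form $\gp\{t\}$ arises from exactly two elements of $\mathcal{T}$.

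Consequently, the cardinality of $\{\gp\{t\}:t\in\mathcal{T}\}$ equals $|\mathcal{T}|/2=\cc$. By Proposition~\ref{2.1}, each such $\gp\{t\}$ is a transcendental group topologically isomorphic to $\ZZ$, so we obtain $\cc$ distinct transcendental groups with the required property.

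I do not anticipate any serious obstacle here; the only point requiring care is to distinguish between "distinct" (as subgroups of $\CC$) and "isomorphic" (as abstract or topological groups), which is handled cleanly by the fibre argument above.
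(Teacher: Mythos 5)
Your proposal is correct and follows essentially the same route as the paper: both reduce to showing that $\gp\{t_1\}=\gp\{t_2\}$ forces $t_1=n_1t_2$ and $t_2=n_2t_1$ with $n_1n_2=1$, the paper normalizing by restricting to positive generators while you account for the two-element fibre $\{t,-t\}$ directly. Either way the count of distinct subgroups is $\cc$, and the appeal to Proposition~\ref{2.1} finishes the argument.
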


\begin{proof} If $\gp\{t_1\}= \gp\{t_2\}$, for positive  transcendental numbers $t_1$ and $t_2$, then $t_1=n_1t_2$ and $t_2=n_2t_1$, for $n_1,n_2\in \NN$. But then $ t_1=n_1n_2t_1$, which implies $n_1=n_2=1$ and so $t_1=t_2$. Thus  distinct positive transcendental numbers generate distinct transcendental groups each of which is  topologically isomorphic to $\ZZ$. The result now immediately follows from the fact that there are  $\mathfrak{c}$ distinct positive transcendental numbers
\end{proof}

Let us now turn to groups generated by two (unequal) transcendental numbers $t_1$ and $t_2$. If $t_2=t_1+1$, then $\gp\{t_1,t_2\}$ contains $t_2-t_1=1$. So $\gp\{t_1,t_2\}$ is not a transcendental group. But we are very close to open questions. For example, while the Euler number e and the number $\pi$ are transcendental numbers, it is not known if either $\pi+\,$e or $\pi-\,$e is transcendental (although one of them must obviously be transcendental).

Much modern transcendental number theory centres on linear independence. (See  \cite{murty}.) In this context we have our next proposition.

\begin{proposition}\label{2.3} Let $a$ and $b$ be transcendental numbers with $a,b\ne0$.  Then $\gp\{a,b\}$  is a cyclic transcendental group if and only if $a$ and $b$ are linearly dependent over $\QQ$.
\end{proposition}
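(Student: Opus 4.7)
The plan is to prove each direction by a direct construction, using only that the group generated by a single transcendental is cyclic and transcendental (Proposition~\ref{2.1}) and that rational multiples of transcendental numbers are transcendental.

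For the forward direction, I would assume $\gp\{a,b\}$ is cyclic, generated by some $c\in\CC$. Then $a=mc$ and $b=nc$ for integers $m,n$, and since $a,b\ne 0$ I get $m,n\ne 0$. The identity $na-mb=0$ is then a nontrivial $\QQ$-linear relation between $a$ and $b$, giving linear dependence. Notice that this direction does not even need the hypothesis that the group is transcendental, only that it is cyclic.

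For the reverse direction, I would start from a nontrivial relation $pa+qb=0$ with $p,q\in\QQ$. Both $p,q$ must be nonzero (else $a$ or $b$ would be zero), so $b=(r/s)a$ for some coprime integers $r,s$ with $s>0$. Set $c=a/s$. Since $a$ is transcendental and $s\in\NN$, the number $c$ is transcendental; hence by Proposition~\ref{2.1} the group $\gp\{c\}$ is a cyclic transcendental group topologically isomorphic to $\ZZ$. Clearly $a=sc$ and $b=rc$ lie in $\gp\{c\}$, so $\gp\{a,b\}\subseteq\gp\{c\}$. For the reverse inclusion I would invoke B\'ezout: pick integers $u,v$ with $us+vr=1$, so that $ua+vb=usc+vrc=c\in\gp\{a,b\}$. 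Therefore $\gp\{a,b\}=\gp\{c\}$, which is cyclic and transcendental, completing the proof.

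There is no real obstacle here; the argument is a routine application of B\'ezout's identity combined with the elementary fact that a nonzero rational multiple of a transcendental number is transcendental. The only thing to watch is the bookkeeping of signs and the reduction of $p/q$ to coprime form $r/s$ with $s>0$, which is straightforward.
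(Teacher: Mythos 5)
Your proof is correct and follows essentially the same route as the paper: the forward direction is identical, and in the converse both arguments produce a transcendental number $c$ (a rational submultiple of $a$) with $a,b\in\gp\{c\}$. The only difference is cosmetic: you use B\'ezout to show $\gp\{a,b\}=\gp\{c\}$ exactly, while the paper stops at $\gp\{a,b\}\subseteq\gp\{c\}$ and invokes the fact that a subgroup of a cyclic group is cyclic.
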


\begin{proof} If $\gp\{a,b\}$ is a cyclic group, then there exists  $c\in\gp\{a,b\}$ such that $a=mc$ and $b=nc$, for some $m,n\in \ZZ\setminus\{0\}$. So  $na-mb=0$. So $a$ and $b$ are linearly dependent over $\QQ$.

Now consider the case that $a$ and $b$ are linearly dependent over $\QQ$; that is, there exist $m_1,m_2,n_1,n_2\in \ZZ$, with $m_1,m_2,n_1,n_2\ne0$, such that $$\frac{m_1}{n_1}a+\frac{m_2}{n_2}b=0$$
Consider the element $d\in \CC$, defined by $d=\dfrac{a}{m_2n_1}$. The number  $d$ is  transcendental  and $a\in \gp\{d\}$. As $b=-\dfrac{ m_1n_2}{m_2n_1}a=-m_1n_2d$  we also have $b\in\gp\{d\}$. Thus $\gp\{a,b\}$ is a subgroup of the cyclic transcendental group $\gp\{d\}$. Hence $\gp\{a,b\}$ is a cyclic transcendental group.
\end{proof}

Proposition~\ref{2.3} tells us, for example, that the numbers e and $\pi$ are linearly independent over $\QQ$ if and only if $\gp\{\mathrm{e}, \pi\}$ is not a cyclic transcendental  group.   If e$\, +\,\pi$ is an algebraic number, then $\gp\{\mathrm{e},\pi\}$ is not a transcendental group. So e and $\pi$ must be linearly independent over $\QQ$. Note that if Schanuel's Conjecture \cite{Lang, murty} is true, then e and $\pi$ are indeed linearly independent over $\QQ$.

\begin{proposition}\label{2.4} If $G$ is a discrete transcendental group and $G\ne \{0\}$, then  $G$ is topologically isomorphic to either $\ZZ$ or $\ZZ\times\ZZ$. 
\end{proposition}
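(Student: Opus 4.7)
My plan is to identify $\CC$ with $\RR^2$ as topological groups (Remarks~\ref{1.1}(iii)) and then invoke the classical structure theorem for discrete subgroups of Euclidean space, which asserts that every such subgroup is free abelian of some rank $r\le n$ and is topologically isomorphic to the discrete group $\ZZ^r$. Applied here, $G$ is a nontrivial discrete subgroup of $\CC\cong\RR^2$, forcing $r\in\{1,2\}$ and giving $G\cong\ZZ$ or $G\cong\ZZ\times\ZZ$ topologically. The transcendence hypothesis plays essentially no role in the argument: it enters only insofar as the assumption $G\ne\{0\}$ rules out the rank-zero case.

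If a self-contained proof is preferred, I would execute the standard two-step argument. Because $G$ is discrete in $\CC$ (hence closed, since singletons are closed and a discrete subgroup of a Hausdorff group is closed), every bounded subset of $G$ is finite, so I can choose $g_1\in G\setminus\{0\}$ of minimal modulus. A short minimality argument --- subtracting integer multiples of $g_1$ from any element of $G\cap\RR g_1$ --- shows that $G\cap\RR g_1=\ZZ g_1$. If $G$ coincides with this cyclic subgroup, the proof is complete; otherwise, I would choose $g_2\in G\setminus\RR g_1$ of minimal distance to the line $\RR g_1$. Then $\{g_1,g_2\}$ is $\RR$-linearly independent in $\CC$, and reducing any $g\in G$ modulo $\ZZ g_1+\ZZ g_2$ via the fundamental parallelogram contradicts the choice of $g_1$ or $g_2$ unless the reduction is zero, which gives $G=\ZZ g_1\oplus\ZZ g_2$.

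To upgrade the abstract isomorphism to a topological one I need only note that $G$ is discrete by hypothesis and both $\ZZ$ and $\ZZ\times\ZZ$ carry the discrete topology; hence any bijective group homomorphism between them is automatically a homeomorphism. The only delicate point in the self-contained argument is the second minimality step establishing $G=\ZZ g_1+\ZZ g_2$ via parallelogram reduction, which requires some care in the case analysis but is entirely routine. Since the lattice theorem for discrete subgroups of $\RR^n$ is classical and widely available, in practice I would simply cite a standard reference rather than reproduce its proof.
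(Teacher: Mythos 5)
Your first paragraph is exactly the paper's argument: the paper identifies $\CC$ with $\RR^2$ and cites the classical structure theorem that every nontrivial closed subgroup of $\RR^n$ is topologically isomorphic to $\RR^a\times\ZZ^b$ with $a+b\le n$, so a discrete one must be $\ZZ$ or $\ZZ\times\ZZ$. Your optional self-contained lattice argument is a correct and standard supplement, but the route is essentially the same.
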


\begin{proof} By Theorem 6 of \cite{morris} or Theorem A.12 of \cite{compbook}, every nontrivial closed subgroup of $\RR^n$, for $n\in \NN$, is topologically isomorphic to $\RR^a\times \ZZ^b$, where $a,b\in \NN\cup\{0\}$ and  $a+b\le n$.  As $\CC$ is topologically isomorphic to $\RR^2$, every discrete  subgroup of $\CC$ is topologically isomorphic to $\ZZ$ or $\ZZ\times\ZZ$. 
\end{proof}

We shall see in Theorem~\ref{4.5}  that, if $i$ as usual denotes an imaginary number who square is $-1$, the transcendental group $\gp\{\mathrm{e},\mathrm{e}i\}$ is topologically isomorphic to the discrete $\ZZ\times\ZZ$.

\section{\bf Counting   Transcendental Groups}

Firstly, in this section, we identify $\cc$ concrete non-cyclic countably infinite transcendental groups. We begin by recalling  the following theorem:

\msk
\noindent\bf Generalized Lindemann Theorem. {\rm \cite[Theorem 9.1]{Niven} }{\it For any distinct algebraic numbers $\alpha_1,\alpha_2,\dots,\alpha_m$, $m\in \NN$, the values $e^{\alpha_1},e^{\alpha_2},\dots,e^{\alpha_m}$ are linearly independent over the field $\AAA$ of algebraic numbers.} \qed

\begin{theorem}\label{3.1} Let $\Omega$ be any finite or infinite set of distinct algebraic numbers such that $0\notin \Omega$. Then the set $\Gamma=\{e^a: a\in \Omega\}$ generates the transcendental group  $\gp(\Gamma)$.
\end{theorem}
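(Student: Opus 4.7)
The plan is to show that if $g$ is any nonzero element of $\gp(\Gamma)$, then $g$ is transcendental; equivalently, if $g \in \gp(\Gamma) \cap \AAA$, then $g = 0$. An arbitrary element of $\gp(\Gamma)$ is a finite $\ZZ$-linear combination of elements of $\Gamma$, so after collecting like terms I may write
\[
g = \sum_{i=1}^{k} n_i \, e^{a_i},
\]
where $k \ge 0$, the numbers $a_1,\dots,a_k \in \Omega$ are \emph{distinct}, and $n_1,\dots,n_k \in \ZZ$.

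The key step is to exploit the hypothesis $0 \notin \Omega$ to enlarge this list by adjoining $0$. First I would observe that $\{0,a_1,a_2,\dots,a_k\}$ is still a set of distinct algebraic numbers, since the $a_i$ are distinct elements of $\Omega$ and $0\notin\Omega$. The Generalized Lindemann Theorem then guarantees that
\[
e^{0}=1,\ e^{a_1},\ e^{a_2},\ \dots,\ e^{a_k}
\]
are linearly independent over $\AAA$.

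Next I would assume for contradiction that $g$ is algebraic. Writing $g \cdot 1 - \sum_{i=1}^{k} n_i\, e^{a_i} = 0$ exhibits an $\AAA$-linear relation among the numbers $1, e^{a_1},\dots,e^{a_k}$, whose coefficients $g, -n_1, \dots, -n_k$ all lie in $\AAA$. By the linear independence just established, every coefficient must vanish; in particular $g = 0$. Thus whenever $g$ is algebraic it must already be $0$, which is exactly the statement that every nonzero element of $\gp(\Gamma)$ is transcendental.

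I do not anticipate a genuine obstacle: the entire argument is a one-line application of the Generalized Lindemann Theorem once I notice that the hypothesis $0\notin\Omega$ is precisely what allows me to insert $e^{0}=1$ into the linearly independent family. The only minor bookkeeping point is the reduction from an arbitrary element of $\gp(\Gamma)$ to a finite sum with \emph{distinct} exponents, which is just a matter of combining repeated terms before invoking the theorem.
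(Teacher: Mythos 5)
Your proof is correct and follows essentially the same route as the paper: both arguments adjoin $0$ to the exponent set (using the hypothesis $0\notin\Omega$ to keep the exponents distinct), apply the Generalized Lindemann Theorem to get linear independence of $1, e^{a_1},\dots,e^{a_k}$ over $\AAA$, and conclude that an algebraic value of a $\ZZ$-combination of the $e^{a_i}$ forces all coefficients (and hence the value) to vanish. Your write-up is, if anything, slightly more careful about collecting repeated terms before invoking linear independence.
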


\begin{proof}
Let   $\alpha_2,\alpha_3,\dots,\alpha_m    \in \Omega$, $m\in \NN$, and put $\alpha_1=0$.  The Generalized Lindemann Theorem implies that $e^{\alpha_1},e^{\alpha_2},\dots,e^{\alpha_m}$ are linearly independent over the field $\AAA$ of algebraic numbers; that is,  for $a_1,a_2,\dots,a_m\in \AAA$ with  $a_1,a_2,\dots,a_m\ne0$,
$$a_1e^{\alpha_1}+a_2e^{\alpha_2}+\dots+a_me^{\alpha_m}\ne0.$$
As $a_1e^{\alpha_1}=a_1\in \AAA$, this says 
$$a_2e^{\alpha_2}+\dots+a_me^{\alpha_m}\notin \AAA.$$
Thus $\Gamma$ is a transcendental group.
\end{proof}

\begin{remark}\label{3.2}
Noting that there are $\cc$ distinct sets $\Omega$ satisfying the conditions of Theorem~\ref{3.1}, we obtain $\cc$ distinct countably infinite  transcendental groups $\gp(\Gamma)$ each of which is algebraically isomorphic to a vector space over $\AAA$ (and hence also over $\QQ$) of countably infinite dimension.\qed
\end{remark}

\begin{theorem}\label{3.3} Let $S=\{\alpha_i:i\in I\}$ for some index set $I$ such that  each $\alpha_i$ is an algebraic number with $\alpha_i\ne 0$. Put $T=\{\log(\alpha_i):\alpha_i\in S\}$. Then $\gp(T)$ is a  transcendental group.
\end{theorem}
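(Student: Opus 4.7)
The plan is to reduce the transcendence question for an arbitrary nonzero element of $\gp(T)$ to a single application of the classical Hermite--Lindemann theorem, which asserts that $e^\gamma$ is transcendental whenever $\gamma$ is a nonzero algebraic number. This is the case $m=1$ of the Generalized Lindemann Theorem already quoted in the paper, so it may be invoked freely.

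First I would unpack the structure of a typical nonzero element $\beta \in \gp(T)$. Since $\gp(T)$ is the additive subgroup of $\CC$ generated by $T = \{\log(\alpha_i) : i \in I\}$, every element has the form
$$\beta \,=\, n_1\log(\alpha_{i_1}) + n_2\log(\alpha_{i_2}) + \cdots + n_k\log(\alpha_{i_k})$$
for some integers $n_1,\ldots,n_k$ and some finite subfamily of $S$. Assume $\beta \ne 0$; the goal is to show $\beta$ is transcendental.

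I would then argue by contradiction: suppose $\beta$ were algebraic. Exponentiating the displayed sum gives
$$e^\beta \,=\, \alpha_{i_1}^{n_1}\alpha_{i_2}^{n_2}\cdots\alpha_{i_k}^{n_k},$$
a product of nonzero algebraic numbers with integer exponents, and therefore itself algebraic. On the other hand, Hermite--Lindemann applied to the nonzero algebraic number $\beta$ forces $e^\beta$ to be transcendental. This contradiction shows $\beta$ is transcendental, and hence $\gp(T)$ is a transcendental group.

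There is no substantive obstacle in the argument --- all of the number-theoretic weight is carried by Hermite--Lindemann --- but one small wrinkle worth noting is the interpretation of $\log(\alpha_i)$: any complex number $\lambda_i$ with $e^{\lambda_i}=\alpha_i$ will do, because the identity $e^\beta = \prod \alpha_{i_j}^{n_j}$ is independent of the branch chosen. In particular, if $1 \in S$ and the principal branch is used, $\log(1)=0$ merely contributes the identity to $\gp(T)$, and the conclusion is unaffected.
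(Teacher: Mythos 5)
Your proof is correct and follows essentially the same route as the paper's: write a nonzero element $g$ of $\gp(T)$ as an integer combination of logarithms, note $e^{g}=\alpha_{i_1}^{n_1}\cdots\alpha_{i_k}^{n_k}$ is algebraic, and derive a contradiction with the Hermite--Lindemann theorem if $g$ were a nonzero algebraic number. Your remark about the independence of the argument from the choice of branch of $\log$ is a sensible addition but does not change the substance.
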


\begin{proof}
Let $g\in \gp(T)$, $g\ne 0$. Then for each $i=1,2,\dots,n$ with $n\in\NN$,
\begin{align*} g=&m_1\log\{\alpha_1\}+ m_2\log\{\alpha_2\} +\dots+m_n\log\{\alpha_n\}, \ m_i\in \ZZ\setminus\{0\}\\
=&\log(\alpha_1^{m_1}\alpha_2^{m_2}\dots\alpha_n^{m_n}).\end{align*} So
$\mathrm{e}^g=\alpha_1^{m_1}\alpha_2^{m_2}\dots\alpha_n^{m_n}$. 

Suppose $g$ is an algebraic number. Then by Theorem 9.11 of \cite{Niven}, $\mathrm{e}^g$ is a transcendental number, while $ \alpha_1^{m_1}\alpha_2^{m_2}\dots\alpha_n^{m_n}$ is an algebraic number, which is a contradiction. So $g$ is a transcendental  number, and $\gp(T)$ is a transcendental group.
\end{proof}

\begin{theorem}\label{3.4} There exist $2^\cc$  transcendental groups.
\end{theorem}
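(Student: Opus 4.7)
The plan is to prove the theorem by matching the trivial upper bound with a lower bound of the same size. Since every transcendental group is a subset of $\CC$ and $|\CC|=\cc$, there are at most $2^\cc$ transcendental groups, and the task reduces to exhibiting $2^\cc$ distinct ones.

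First I would invoke the transcendental group $\BB$ of Remarks~\ref{1.1}, which by item~(x) of those remarks is a vector space of dimension $\cc$ over $\QQ$. Fix a $\QQ$-basis $\{b_\alpha : \alpha < \cc\}$ of $\BB$. For each subset $S \subseteq \cc$, set $G_S = \gp\{b_\alpha : \alpha \in S\}$. Since $G_S$ is a subgroup of the transcendental group $\BB$ (Proposition~\ref{1.4}), every nonzero element of $G_S$ is transcendental, and hence $G_S$ is itself a transcendental group.

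The key step is to verify that the assignment $S \mapsto G_S$ is injective. If $S_1 \neq S_2$, pick $\alpha \in S_1 \setminus S_2$ (after relabelling if necessary). Then $b_\alpha \in G_{S_1}$, while every element of $G_{S_2}$ is a finite integer combination of basis vectors indexed by elements of $S_2$; since $\{b_\beta : \beta < \cc\}$ is $\QQ$-linearly independent and $\alpha \notin S_2$, no such combination can equal $b_\alpha$. Hence $b_\alpha \notin G_{S_2}$ and $G_{S_1} \neq G_{S_2}$. This produces $2^\cc$ pairwise distinct transcendental groups, completing the argument.

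No serious obstacle is anticipated: the proof is entirely algebraic and rests on the $\cc$-dimensional $\QQ$-vector space structure of $\BB$ already recorded in Remarks~\ref{1.1}. The stronger counting suggested by item~(iv) of the abstract, asking that the $2^\cc$ groups additionally be pairwise non-homeomorphic (and indeed live inside $\RR$), would require genuinely more work and is presumably taken up separately in the paper.
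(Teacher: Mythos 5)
Your proof is correct and takes essentially the same approach as the paper: both arguments select a linearly independent family of $\cc$ transcendental numbers whose integer spans avoid $\AAA$, and then count the $2^\cc$ subgroups generated by its subsets, with the same linear-independence argument giving injectivity of $S\mapsto \gp(S)$. The only cosmetic difference is that you work with a $\QQ$-basis of the complement $\BB$ and invoke Proposition~\ref{1.4}, whereas the paper works with an $\AAA$-basis of $\CC$ containing $1$ and verifies transcendence of the group elements directly.
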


\begin{proof}  Consider $\CC$ as a vector space over $\AAA$ of dimension $\cc$. Let $R$ be a basis for this vector space such that $1\in R$. Put 
$T= \{x\in R: x\notin \AAA\}.$
For each subset $S'$ of $T$, let $S=S'\cup \{1\}$. Then $S$ is linearly independent over $\AAA$. 

Let $g\in \gp(S'), g\ne0$ and suppose $g$ is not a transcendental number. 
Then $g=m_1s_1+m_2s_2+\dots+m_ns_n$, where $s_1,s_2,\dots,s_n\in S'$ and $m_1,m_2,\dots,m_n\in \ZZ\setminus \{0\}$. Thus
$$g=m_1s_1+m_2s_2+\dots+m_ns_n= a\in \AAA.$$
So $m_1s_1+m_2s_2+\dots+m_ns_n+(-a).1=0$. But this contradicts the fact that $s_1,s_2,\dots,s_n,1\in S$ and so are linearly independent over $\AAA$. Hence  $g$ is indeed a transcendental number. Therefore $\gp(S')$ is a transcendental group. As there are $2^\cc$  different such $S'$ each generating a different group, the theorem is proved.
\end{proof}

\rm As a consequence of the proof of Theorem~\ref{3.4} we have the following theorem.

\begin{theorem}\label{3.5}
For each cardinal number $\aleph$ with $\aleph_0\le \aleph\le \cc$, there exist $2^\aleph$  transcendental groups of cardinality $\aleph$. \qed
\end{theorem}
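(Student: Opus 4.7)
The plan is to reuse the $\AAA$-linearly independent set $T\cup\{1\}\subseteq\CC$ constructed in the proof of Theorem~\ref{3.4}, where every element of $T$ is transcendental and $|T|=\cc$, and to extract from it $2^\aleph$ pairwise distinct transcendental subgroups of $\CC$ of cardinality $\aleph$.

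First I would fix a subset $T_0\subseteq T$ with $|T_0|=\aleph$ and, using that $\aleph$ is infinite, split it as a disjoint union $T_0=T_1\sqcup T_2$ with $|T_1|=|T_2|=\aleph$. For each subset $S\subseteq T_1$ I would set $S':=S\cup T_2$ and consider $G_S:=\gp(S')$. Since $S'\cup\{1\}$ is a subset of $T\cup\{1\}$ and hence $\AAA$-linearly independent, the argument of Theorem~\ref{3.4} applied verbatim shows that each $G_S$ is a transcendental group.

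Next I would verify the cardinality and injectivity requirements. Because $T_2\subseteq G_S$ we have $|G_S|\ge\aleph$; conversely, $G_S$ consists of finite integer linear combinations of the $\aleph$ elements of $S'$, so $|G_S|\le\aleph$. For injectivity, if $S_1\ne S_2$ are subsets of $T_1$ and $s\in S_1\setminus S_2$, then $s\in G_{S_1}$, whereas $s\in G_{S_2}$ would express $s$ as an integer linear combination of elements of $S_2\cup T_2$, yielding a nontrivial $\QQ$-linear (hence $\AAA$-linear) relation among elements of $T\cup\{1\}$, which is impossible. Thus $S\mapsto G_S$ is injective, and since there are $2^{|T_1|}=2^\aleph$ choices of $S$, this produces $2^\aleph$ distinct transcendental groups of cardinality $\aleph$.

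I do not anticipate any serious obstacle, as all the transcendence-theoretic work is already done in the proof of Theorem~\ref{3.4}. The only new ingredient is the partition trick $T_0=T_1\sqcup T_2$, whose purpose is precisely to pin the cardinality of every $G_S$ to $\aleph$ (by forcing $T_2\subseteq G_S$) while still retaining $2^\aleph$ independent binary choices in the selection of $S\subseteq T_1$.
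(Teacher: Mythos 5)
Your proposal is correct and is essentially the argument the paper intends: the paper states Theorem~\ref{3.5} as an immediate consequence of the proof of Theorem~\ref{3.4}, and your partition trick $T_0=T_1\sqcup T_2$ is exactly the standard way to fill in the details, pinning each $\gp(S\cup T_2)$ to cardinality $\aleph$ while keeping $2^{\aleph}$ independent choices, with distinctness following from the $\AAA$-linear independence of $T\cup\{1\}$ just as in Theorem~\ref{3.4}. No gaps.
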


If in the proof of Theorem~\ref{3.4} we replace $\CC$ by $\RR$ and $\AAA$ by $\AAA\cap \RR$, then we obtain the following theorem:

\begin{theorem}\label{3.6}For each cardinal number $\aleph$ with $\aleph_0\le \aleph\le \cc$, there exist $2^\aleph$  transcendental groups, each of which is a topological subgroup of $\RR$ and has cardinality $\aleph$. \qed
\end{theorem}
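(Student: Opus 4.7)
The plan is to mimic the construction in the proof of Theorem~\ref{3.4}, but carried out inside $\RR$ with $\AAA\cap\RR$ playing the role of $\AAA$. First I would view $\RR$ as a vector space over the countable field $\AAA\cap\RR$. Since $|\AAA\cap\RR|=\aleph_0$ and $|\RR|=\cc$, this vector space has dimension exactly $\cc$. Choose a Hamel basis $R$ for $\RR$ over $\AAA\cap\RR$ that contains $1$, and set $T=\{x\in R: x\notin \AAA\}$. Because $|R|=\cc$ while $|R\cap\AAA|\le|\AAA|=\aleph_0$, we have $|T|=\cc$.

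For each subset $S'\subseteq T$, let $S=S'\cup\{1\}$, which remains linearly independent over $\AAA\cap\RR$. I would then repeat verbatim the argument of Theorem~\ref{3.4}: if some nonzero $g\in\gp(S')$ were algebraic, writing $g=m_1s_1+\dots+m_ns_n=a$ with $a\in\AAA\cap\RR$ and moving $a$ to the left yields a nontrivial linear dependence among $\{s_1,\dots,s_n,1\}\subseteq S$ over $\AAA\cap\RR$, a contradiction. Hence $\gp(S')$ is a transcendental group, and since all generators are real, it is automatically a topological subgroup of $\RR$. Moreover, distinct subsets produce distinct groups: if $s\in S_1'\setminus S_2'$, then $s\in\gp(S_1')$, whereas $s\in\gp(S_2')$ would force a linear dependence among elements of $R$ over $\AAA\cap\RR$.

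To obtain groups of exact cardinality $\aleph$, I would restrict to those $S'\subseteq T$ with $|S'|=\aleph$. For such $S'$, the abelian group $\gp(S')$ has cardinality $\aleph$ (as $\aleph_0\le\aleph$ forces $\aleph\cdot\aleph_0=\aleph$). The theorem then reduces to a cardinal count: we need to show that $T$, of cardinality $\cc$, contains exactly $2^\aleph$ subsets of size $\aleph$. The upper bound is $\cc^\aleph=(2^{\aleph_0})^\aleph=2^\aleph$. For the lower bound, fix any $A\subseteq T$ with $|A|=\aleph$ and split $A=A_1\sqcup A_2$ with $|A_1|=|A_2|=\aleph$; the map sending $B\subseteq A_2$ to $A_1\cup B$ injects $2^\aleph$ distinct $\aleph$-sized subsets into $T$.

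The main (and only) delicate point is checking that $\RR$, rather than $\CC$, still carries a basis over its real algebraic subfield large enough to host $2^\aleph$ independent subsets of size $\aleph$; this is exactly the dimension computation $\dim_{\AAA\cap\RR}\RR=\cc$ together with the subset-counting argument above. Everything else transfers mechanically from the proof of Theorem~\ref{3.4}, so no new obstruction is expected.
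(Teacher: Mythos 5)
Your proposal is correct and is essentially the paper's own argument: the paper proves Theorem~\ref{3.6} precisely by rerunning the proof of Theorem~\ref{3.4} with $\RR$ in place of $\CC$ and $\AAA\cap\RR$ in place of $\AAA$. You additionally spell out the cardinality bookkeeping (that $T$ has $2^\aleph$ subsets of size $\aleph$ and that $|\gp(S')|=\aleph$), which the paper leaves implicit.
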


\begin{remark}\label{3.7}  Corollary 1.2 of  \cite{Comfort}  states that if $G$ is an abelian group of cardinality $\aleph>\aleph_0$, then $G$ has $2^\aleph$  subgroups. If one knew that $G$ has  as a subgroup a transcendental group of cardinality $\aleph$, then one could deduce Theorem~\ref{3.5} except for the case $\aleph=\aleph_0$. However, the proof in \cite{Comfort} is not shorter than the one presented here.
\qed
\end{remark}

\section{\bf The Topology of Transcendental Groups}\label{4}

\begin{remark}\label{4.1}
Jan van Mill drew my attention to    Theorem 1.9.6, Theorem 1.9.8, and Corollary 1.9.9 of  \cite{vanMill} where it is proved that
\begin{itemize}
\item[(i)] the  space $\QQ$ of all rational numbers
 up to homeomorphism is the unique nonempty countable separable metrizable space without isolated points. 
 \item[(ii)] the  space $\mathbb{P}$ of all irrational numbers up to homeomorphism is  the unique nonempty separable metrizable topologically complete nowhere locally compact zero-dimensional space.  
 \item[(iii)] $\mathbb{P}$ is homeomorphic to $\NN^{\aleph_0}$.  
\end{itemize}
We have already noted  that all transcendental groups are separable, metrizable and zero-dimensional.   \qed\end{remark}

As immediate corollaries of the observations in Remark~\ref{4.1} we have:

\begin{corollary}\label{4.2} The topological space $\AAA$  of all algebraic numbers  is homeomorphic to $\QQ$.\qed\end{corollary}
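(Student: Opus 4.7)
The plan is to verify that $\AAA$ satisfies each of the hypotheses in the characterization of $\QQ$ recalled in Remark~\ref{4.1}(i), and then conclude directly.  By Remarks~\ref{1.1}(ii), $\AAA$ has cardinality $\aleph_0$, so it is a nonempty countable set.  Since $\AAA$ is a subspace of $\CC$ with its euclidean topology, and $\CC$ is separable and metrizable, so is $\AAA$.

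The remaining condition to check is that $\AAA$ has no isolated points.  I would give a quick direct argument: for any $\alpha\in\AAA$, the rationals $1/n$ for $n\in\NN$ are algebraic and tend to $0$, so the numbers $\alpha+1/n$ are algebraic, all distinct from $\alpha$, and converge to $\alpha$ in the euclidean topology.  Hence $\alpha$ is not isolated.  (Alternatively, since $\AAA$ is an additive subgroup of $\CC$, translations by elements of $\AAA$ are homeomorphisms of $\AAA$, so $\AAA$ is a homogeneous space; combined with the fact that $\AAA$ is dense in the connected space $\CC$ and hence cannot be discrete, this again rules out isolated points.)

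Having verified all four conditions, Remark~\ref{4.1}(i) applies and yields a homeomorphism between $\AAA$ and $\QQ$.  There is no serious obstacle here; the only point requiring even momentary thought is the absence of isolated points, and this is immediate from the algebraicity of the rationals.
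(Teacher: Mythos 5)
Your proof is correct and follows exactly the route the paper intends: verify that $\AAA$ is a nonempty countable separable metrizable space without isolated points and invoke the characterization of $\QQ$ from Remark~\ref{4.1}(i). The paper leaves this verification implicit, and your check of the no-isolated-points condition (via $\alpha + 1/n \to \alpha$ within $\AAA$) is the right way to fill it in.
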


\begin{corollary}\label{4.3} Every countably infinite separable metrizable topological group is either discrete  or homeomorphic to $\QQ$.\qed\end{corollary}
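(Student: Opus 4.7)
The plan is to use the fact, stated in Remark~\ref{4.1}(i), that $\QQ$ is up to homeomorphism the unique nonempty countable separable metrizable space without isolated points, together with the homogeneity of topological groups.

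First I would note that any topological group $G$ is a homogeneous space: for each $g\in G$, left translation $x\mapsto gx$ is a self-homeomorphism of $G$ mapping the identity $0$ to $g$. Therefore, if $G$ has any isolated point, then every point of $G$ is isolated, so the topology on $G$ is the discrete topology.

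Next, given a countably infinite separable metrizable topological group $G$, I would split into the two cases provided by this dichotomy. If $G$ has an isolated point, the preceding observation gives that $G$ is discrete. Otherwise $G$ is a nonempty countable separable metrizable space without isolated points, and Remark~\ref{4.1}(i) immediately yields that $G$ is homeomorphic to $\QQ$.

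There is no real obstacle here; the only subtle point is checking that "some point is isolated" implies "every point is isolated", which is exactly the homogeneity argument above. Everything else is direct quotation of the cited characterization of $\QQ$.
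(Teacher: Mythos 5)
Your argument is correct and is exactly the reasoning the paper intends: the corollary is stated as an immediate consequence of Remark~\ref{4.1}(i), and the only thing to check is the homogeneity step (one isolated point forces all points to be isolated), which you supply via translation. Nothing further is needed.
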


\begin{lemma}\label{4.4}
Let $a$ be any transcendental number which is also a nonzero real number. Then $\gp\{a,ai\}$ is a transcendental group isomorphic as a topological group to $\ZZ\times \ZZ$. 
\end{lemma}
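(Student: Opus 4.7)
The plan is to describe every element of $\gp\{a,ai\}$ in the form $ma+nai=a(m+ni)$ with $m,n\in\ZZ$, and then split the lemma into two independent claims: (a) every nonzero such element is transcendental; (b) the subgroup is discrete of rank $2$.

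For (a), I would use the fact that the algebraic numbers $\AAA$ form a field. For any $(m,n)\ne(0,0)$, the Gaussian integer $m+ni$ is a nonzero element of $\AAA$. Since $a$ is transcendental, $a(m+ni)$ must also be transcendental: otherwise $a=a(m+ni)/(m+ni)$ would be a quotient of two algebraic numbers, hence algebraic, contradicting the hypothesis on $a$. This handles all nonzero elements of $\gp\{a,ai\}$ at once, so $\gp\{a,ai\}$ is a transcendental group.

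For (b), since $a$ is a nonzero real number and $ai$ is a nonzero purely imaginary number, the set $\{a,ai\}$ is linearly independent over $\RR$ and thus forms an $\RR$-basis of $\CC\cong\RR^2$. Consequently $\gp\{a,ai\}$ is a rank-$2$ lattice in $\CC$, in particular a discrete subgroup. By Proposition~\ref{2.4}, every nontrivial discrete transcendental subgroup of $\CC$ is topologically isomorphic to $\ZZ$ or to $\ZZ\times\ZZ$. The cyclic possibility is ruled out because the $\RR$-linear independence of $\{a,ai\}$ implies that the surjective homomorphism $\ZZ\times\ZZ\to\gp\{a,ai\}$ sending $(m,n)\mapsto ma+nai$ is injective, so $\gp\{a,ai\}$ has torsion-free rank $2$ and cannot be isomorphic to $\ZZ$.

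There is no real obstacle here — the lemma is essentially a direct application of Proposition~\ref{2.4} combined with the field property of $\AAA$. The only step that might invite a slip is conflating the two cases $(m=0$ or $n=0)$ with the general case; the uniform argument through $a(m+ni)$ with $m+ni\in\AAA\setminus\{0\}$ avoids this and keeps the proof a few lines long.
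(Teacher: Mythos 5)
Your proof is correct, and it is in fact more complete than the paper's, while taking a partly different route on the key step. For discreteness the paper argues via an explicit metric estimate: for $(m,n)\ne(0,0)$ one has $|ma+nai|=|a|\sqrt{m^2+n^2}\ge|a|>0$, so $0$ is isolated and the group is discrete. You instead observe that $\{a,ai\}$ is linearly independent over $\RR$, hence an $\RR$-basis of $\CC$, so its $\ZZ$-span is a full-rank lattice and in particular discrete. Both are fine; the paper's norm computation is the more self-contained of the two, since your lattice step quietly uses the standard fact that the $\ZZ$-span of an $\RR$-basis is discrete, which is itself usually proved by exactly such a norm estimate after a linear change of coordinates. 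On the other hand, you supply two things the paper's proof omits entirely: the verification that every nonzero element $ma+nai=a(m+ni)$ is transcendental (via the field property of $\AAA$ applied to the nonzero Gaussian integer $m+ni$), and the explicit reason the group has rank $2$, which rules out the cyclic alternative in Proposition~\ref{2.4} and forces the conclusion $\ZZ\times\ZZ$ rather than $\ZZ$. So your write-up is a genuine improvement in completeness, at the cost of outsourcing one standard lemma.
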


\begin{proof}
By Remarks~\ref{4.1}(i), $\gp\{a,ai\}$ is either a discrete group or has $0$ as a nonisolated point. But for $z\in \gp\{a,ai\}$, $z\ne 0$, for $m,n\in \ZZ$, not both zero, 
$$|z|=|ma+nai| = \sqrt{(m^2+n^2)a^2}\ge |a|.$$ So $0$ is indeed an isolated point. Thus $\gp\{a,ai\}$ is discrete.
\end{proof}

\begin{theorem}\label{4.5} There exist $\cc$ countably infinite transcendental groups of the form $\gp\{a,ai\}$ where each is  isomorphic as a topological group to $\ZZ\times \ZZ$. \qed
\end{theorem}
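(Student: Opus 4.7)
The plan is to combine Lemma~\ref{4.4} with a counting argument modeled on Corollary~\ref{2.2}. By Lemma~\ref{4.4}, for \emph{every} nonzero real transcendental $a$ the group $\gp\{a,ai\}$ is a discrete transcendental group topologically isomorphic to $\ZZ\times\ZZ$, so all we need is to exhibit $\cc$ distinct values of $a$ that produce pairwise distinct groups $\gp\{a,ai\}$.

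First I would restrict attention to the positive real transcendentals, of which there are $\cc$. Then I would show that the assignment $a\mapsto \gp\{a,ai\}$ is injective on this set. Suppose $\gp\{a_1,a_1i\}=\gp\{a_2,a_2i\}$ for positive real transcendentals $a_1,a_2$. A general element of $\gp\{a_2,a_2i\}$ has the form $m a_2+n a_2 i=(m+ni)a_2$ with $m,n\in\ZZ$, so the equality gives $a_1=(m+ni)a_2$ for some $m,n\in\ZZ$. Taking imaginary parts and using $a_2\ne 0$ forces $n=0$, and then $a_1=m a_2$ with $m\in\ZZ$; positivity forces $m\in\NN$. By symmetry $a_2=m'a_1$ with $m'\in\NN$, so $a_1=mm'a_1$, hence $m=m'=1$ and $a_1=a_2$.

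Combining these two observations, distinct positive real transcendentals yield distinct groups of the required form, and each such group is topologically isomorphic to $\ZZ\times\ZZ$ by Lemma~\ref{4.4}. Since the set of positive real transcendentals has cardinality $\cc$, this produces $\cc$ countably infinite transcendental groups of the form $\gp\{a,ai\}$, each topologically isomorphic to $\ZZ\times\ZZ$, completing the proof.

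I do not expect a genuine obstacle: the entire argument is essentially a one-line variant of Corollary~\ref{2.2}, with the only small subtlety being the use of the real part/imaginary part decomposition to kill the coefficient of $i$ in the equation $a_1=(m+ni)a_2$.
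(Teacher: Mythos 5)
Your proposal is correct and follows exactly the route the paper intends: the theorem is stated with a \qed as an immediate consequence of Lemma~\ref{4.4} together with the counting argument of Corollary~\ref{2.2}, and your injectivity argument (killing the coefficient of $i$ by taking imaginary parts, then arguing as in Corollary~\ref{2.2}) is the natural way to make that counting explicit.
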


Using a similar argument used to prove those  statements in Remark~\ref{4.1}, one readily obtains:

\begin{corollary}\label{4.6} The topological space of transcendental numbers is homeomorphic to both $\mathbb{P}$ and $\NN^{\aleph_0}$.\qed\end{corollary}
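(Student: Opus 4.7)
The plan is to invoke the characterization of $\mathbb{P}$ recalled in Remark~\ref{4.1}(ii): to prove $\mathcal{T}$ is homeomorphic to $\mathbb{P}$ it suffices to check that $\mathcal{T}$ is nonempty, separable metrizable, topologically complete, nowhere locally compact, and zero-dimensional. Once this is done, the second homeomorphism $\mathcal{T}\cong\NN^{\aleph_0}$ follows immediately from Remark~\ref{4.1}(iii).

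Four of the five properties are already in hand. Nonemptiness is trivial (for example, $\mathrm{e}\in\mathcal{T}$), and as noted in Remarks~\ref{1.1} the subspace $\mathcal{T}$ of $\CC$ is separable, metrizable, and zero-dimensional. For topological completeness I would appeal to the classical theorem of Alexandroff that a $G_\delta$ subspace of a completely metrizable space is itself completely metrizable. Since $\AAA$ is countable, it is an $F_\sigma$ in $\CC$ (a countable union of singletons), so $\mathcal{T}=\CC\setminus\AAA$ is $G_\delta$ in the Polish space $\CC$ and hence topologically complete.

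The one step that requires actual work, and which I expect to be the main obstacle, is nowhere local compactness. My plan is to first observe that $\mathcal{T}$ is dense in $\CC$: every nonempty Euclidean open set in $\CC$ has cardinality $\cc$, while $\AAA$ is countable, so any such open set meets $\mathcal{T}$. Now suppose for contradiction that some $t\in\mathcal{T}$ has a compact neighborhood $U$ in $\mathcal{T}$. Then $U\supseteq V\cap\mathcal{T}$ for some Euclidean open $V\ni t$ in $\CC$. Since $U$ is compact in the Hausdorff space $\CC$, it is closed in $\CC$, and by density of $\mathcal{T}$ the closure of $V\cap\mathcal{T}$ in $\CC$ is $\overline{V}$. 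Consequently $U\supseteq\overline{V}\supseteq V$, but $V$ contains algebraic numbers, contradicting $U\subseteq\mathcal{T}$. Hence $\mathcal{T}$ is nowhere locally compact.

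With all five properties verified, Remark~\ref{4.1}(ii) gives $\mathcal{T}\cong\mathbb{P}$, and then Remark~\ref{4.1}(iii) yields $\mathcal{T}\cong\NN^{\aleph_0}$, completing the proof.
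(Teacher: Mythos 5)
Your proof is correct and follows the same route the paper intends: apply the characterization of $\mathbb{P}$ from Remark~\ref{4.1}(ii) by checking that $\mathcal{T}$ is nonempty, separable metrizable, zero-dimensional, topologically complete (a $G_\delta$ in the Polish space $\CC$ since $\AAA$ is countable), and nowhere locally compact (using density of both $\mathcal{T}$ and $\AAA$ in $\CC$). The paper merely asserts that ``a similar argument'' works, so your write-up in fact supplies the verification the paper omits, and every step of it is sound.
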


\begin{remark}\label{4.7} It follows from Corollary~\ref{4.6} that every transcendental group is homeomorphic to a subspace of $\NN^{\aleph_0}$.\qed
\end{remark}

\begin{remark}\label{4.8} From Corollary~\ref{4.3} and Theorem~\ref{3.3} we see that if $S=\alpha_n=1+\frac{1}{n}$, for each $n\in \NN$, and $T=\{\log(\alpha_n):n\in \NN\}$, then 0 is a limit point of $\gp(T)$. So the topological group $\gp(T)$ has no isolated points and therefore is homeomorphic to $\QQ$.
\qed\end{remark}

\begin{theorem}\label{4.9} There exist $\cc$ transcendental groups homeomorphic to $\QQ$.
\end{theorem}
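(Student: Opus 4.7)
The plan is to use, for each transcendental number $t\in\CC$, the $\QQ$-span $H_t=t\QQ=\{tq:q\in\QQ\}$, and to observe that among these groups there are $\cc$ pairwise distinct members, each one a transcendental group homeomorphic to $\QQ$.

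First I would verify that every $H_t$ is a transcendental group: it is clearly a subgroup of $\CC$, and every nonzero element $tq$ is transcendental because a nonzero rational multiple of a transcendental number is transcendental. Next I would show $H_t$ is homeomorphic to $\QQ$. The multiplication-by-$t$ map $\varphi_t\colon\QQ\to H_t$, $q\mapsto tq$, is a group isomorphism whose inverse $z\mapsto z/t$ is also continuous, so $\varphi_t$ is in fact a topological group isomorphism, which is stronger than the required homeomorphism. Alternatively, one could invoke Corollary~\ref{4.3}: $H_t$ is countably infinite, separable, and metrizable, and $0$ is a non-isolated point (witness $t/n\to 0$), so $H_t$ is not discrete and must be homeomorphic to $\QQ$.

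It then remains to count. The groups $H_{t_1}$ and $H_{t_2}$ coincide precisely when $t_1/t_2\in\QQ$, so the assignment $t\mapsto H_t$ factors through the equivalence relation on the set $\mathcal{T}$ of transcendental numbers defined by $t_1\sim t_2$ iff $t_1/t_2\in\QQ$. Each equivalence class is an orbit under multiplication by $\QQ\setminus\{0\}$ and is therefore countable, so since $|\mathcal{T}|=\cc$ the quotient has cardinality $\cc$, yielding $\cc$ pairwise distinct groups $H_t$. No step presents a genuine obstacle; the only mildly delicate point is the countability of each equivalence class, which is immediate from $|\QQ|=\aleph_0$.
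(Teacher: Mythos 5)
Your proof is correct, and it takes a genuinely different route from the paper's. The paper builds its examples as $\gp(T_S)$ where $T_S=\{e^{\alpha}:\alpha\in S\}$ for $S=S'\cup\{\tfrac1n:n\in\NN\}$, relying on the Generalized Lindemann Theorem (via Theorem~\ref{3.1}) to guarantee transcendence, on $e^{1/n}\to 1$ to rule out discreteness, and on Corollary~\ref{4.3} to conclude the homeomorphism with $\QQ$; the resulting groups are free abelian of countably infinite rank. Your groups $H_t=t\QQ$ need only the fact that the algebraic numbers form a field, and they are homeomorphic to $\QQ$ by the explicit topological group isomorphism $q\mapsto tq$, so you do not even need Corollary~\ref{4.3} (though your fallback via $t/n\to 0$ is also fine). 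Your counting argument --- quotienting $\mathcal{T}$ by the countable equivalence classes $t\QQ\setminus\{0\}$ --- is sound and is essentially the same device the paper uses in Corollary~\ref{2.2}. What each approach buys: yours is shorter and more elementary, and it even produces $\cc$ groups pairwise isomorphic to $\QQ$ \emph{as topological groups}; the paper's construction is heavier but fits into its broader programme of producing transcendental groups with prescribed algebraic structure (free abelian, or $\AAA$-vector spaces as in Remark~\ref{3.2}), which the rank-one groups $t\QQ$ cannot do.
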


\begin{proof}
Let $S'$ be any subset of $\AAA$ such that $\frac{1}{n}\notin S'$, for all $ n\in \NN$. Put $S=S'\cup \{\frac{1}{n}: n\in \NN\}$. Define $T_{S} =\{e^\alpha: \alpha\in S\}$. As observed in Theorem~\ref{3.1}, $\gp(T_{S})$ is a transcendental group. Clearly $e^\frac{1}{n}\to 1$ as $n\to \infty$. So the countably infinite group $\gp(T_{S})$ is not discrete.  By Corollary~\ref{4.3}, $\gp(T_{S})$ is therefore homeomorphic to $\QQ$. As there are $\cc$  different possible choices of $S'$, the theorem is proved.
\end{proof}

\begin{theorem}\label{4.10} Each countably infinite transcendental  group is a member of one of three classes, where each class  has $\mathfrak{c}$  members -- the first class consists of those isomorphic as a topological group to the discrete group $\ZZ$ of integers, the second class consists of those isomorphic as a topological group to $\ZZ\times \ZZ$, and the third class consists of those homeomorphic to the topological space $\QQ$ of all rational numbers.
\end{theorem}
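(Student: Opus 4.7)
The plan is essentially to assemble three results that have already been established in the paper, so the argument will be brief. First, I would invoke Proposition~\ref{1.3} to recall that any transcendental group is separable, metrizable, and zero-dimensional. Then, given a countably infinite transcendental group $G$, I would apply Corollary~\ref{4.3}, which splits such groups into two cases: either $G$ is discrete, or $G$ is homeomorphic to $\QQ$. In the discrete case, because $G$ is infinite and in particular nontrivial, Proposition~\ref{2.4} applies and forces $G$ to be topologically isomorphic to $\ZZ$ or to $\ZZ\times\ZZ$. This establishes that every countably infinite transcendental group falls into one of the three named classes.

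The next task is to verify that each of these three classes has exactly $\mathfrak{c}$ members. For the lower bound, the paper has already done the work: Corollary~\ref{2.2} exhibits $\mathfrak{c}$ distinct transcendental groups topologically isomorphic to $\ZZ$; Theorem~\ref{4.5} exhibits $\mathfrak{c}$ distinct transcendental groups topologically isomorphic to $\ZZ\times\ZZ$; and Theorem~\ref{4.9} exhibits $\mathfrak{c}$ distinct transcendental groups homeomorphic to $\QQ$. So each class has at least $\mathfrak{c}$ members.

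For the upper bound, I would simply observe that any countably infinite subgroup of $\CC$ is determined by a countable subset of $\CC$, and the number of countable subsets of $\CC$ is $\mathfrak{c}^{\aleph_0}=\mathfrak{c}$. Hence there are at most $\mathfrak{c}$ countably infinite transcendental groups in total, so certainly at most $\mathfrak{c}$ in each class. Combined with the lower bounds, each class has cardinality exactly $\mathfrak{c}$.

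The main obstacle here is not really an obstacle: the theorem is a synthesis statement, and every ingredient (the trichotomy from Corollary~\ref{4.3}, the classification of discrete subgroups of $\CC$ from Proposition~\ref{2.4}, and the three cardinality witnesses) has been proved earlier. The only point worth stating carefully is the $\mathfrak{c}^{\aleph_0}=\mathfrak{c}$ counting argument supplying the upper bound, and that is standard cardinal arithmetic.
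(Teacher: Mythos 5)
Your proposal is correct and follows essentially the same route as the paper, which simply cites Corollary~\ref{2.2}, Proposition~\ref{2.4}, Corollary~\ref{4.3}, Theorem~\ref{4.5}, and Theorem~\ref{4.9} as immediately yielding the result. Your explicit $\mathfrak{c}^{\aleph_0}=\mathfrak{c}$ upper-bound count is a small but welcome addition that the paper leaves implicit.
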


\begin{proof} The theorem is an immediate consequence of
 Corollary 2.2, Proposition 2.4, Corollary 4.3, Theorem 4.5, and Theorem 4.9.  
\end{proof}

\begin{remark}\label{4.11} Jan van Mill has pointed out to me a beautiful consequence of Thorem~\ref{3.4}. The Laverentieff Theorem,  Theorem A8.5 of \cite{vanMill}, implies that there are at most $\cc$ subspaces of $\CC$ which are homeomorphic. 
So from Theorem~\ref{3.4} there are $2^\cc$ transcendental groups no two of which are homeomorphic. 

Of course this trivially has the consequence there are $2^\cc$ transcendental groups no two of which are isomorphic as topological groups. 

So the topological group $\BB$, introduced in Remarks~\ref{1.1} which is algebraically isomorphic to $\RR$, has $2^\cc$ subgroups no two of which are  isomorphic as topological groups.  
\qed\end{remark}

Noting Remark~\ref{4.11} and Theorem~\ref{3.6} we obtain:

\begin{theorem}\label{4.12} For each cardinal $\aleph$ with $\aleph_0< \aleph\le\cc$, the topological groups $\CC$ and $\RR$ each have $2^\aleph$ transcendental subgroups no two of which are isomorphic as topological groups or even homeomorphic.
\qed
\end{theorem}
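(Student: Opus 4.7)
The plan is to combine the existence results of Theorems~\ref{3.5} and~\ref{3.6} with the Lavrentieff-based upper bound articulated in Remark~\ref{4.11}. First, fix a cardinal $\aleph$ with $\aleph_0<\aleph\le\cc$ and invoke Theorem~\ref{3.5} inside $\CC$ and Theorem~\ref{3.6} inside $\RR$ to obtain families $\mathcal{F}_\CC$ and $\mathcal{F}_\RR$, each consisting of $2^\aleph$ pairwise distinct transcendental subgroups of cardinality $\aleph$ in the respective ambient group.

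Next, partition each family by the equivalence relation ``homeomorphic as topological spaces''. By the Lavrentieff Theorem recorded in Remark~\ref{4.11}, any given subspace of $\CC$ (or of $\RR$) admits at most $\cc$ homeomorphic copies inside the ambient Polish space: a homeomorphism between two such subspaces extends to a homeomorphism between $G_\delta$ hulls, and since $\CC$ has only $\cc$ $G_\delta$ subsets, the total number of such extended homeomorphisms is $\cc$. Hence every equivalence class in $\mathcal{F}_\CC$ or $\mathcal{F}_\RR$ has cardinality at most $\cc$, and splitting a family of size $2^\aleph$ into such classes yields at least $2^\aleph/\cc=2^\aleph$ classes. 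Selecting one representative per class produces $2^\aleph$ transcendental subgroups of $\RR$ (respectively, $\CC$) no two of which are homeomorphic, and hence no two of which are topologically isomorphic.

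The main delicate point is the cardinal arithmetic step $2^\aleph/\cc=2^\aleph$, which is immediate from Cantor's theorem when $\aleph=\cc$ and in general is the same accounting already implicit in Remark~\ref{4.11}; no new topological or number-theoretic input is required beyond the Lavrentieff upper bound and the lower bounds already in hand.
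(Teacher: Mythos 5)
Your argument is exactly the paper's: the paper's entire proof is the sentence ``Noting Remark~\ref{4.11} and Theorem~\ref{3.6} we obtain,'' i.e.\ combine the $2^\aleph$ transcendental subgroups of $\CC$ (Theorem~\ref{3.5}) and of $\RR$ (Theorem~\ref{3.6}) with the Lavrentieff bound that each homeomorphism type has at most $\cc$ representatives among subspaces of a Polish space, and your write-up merely makes the resulting counting explicit. One caveat, which applies equally to the paper's own one-line proof: the step ``a family of size $2^\aleph$ split into classes of size at most $\cc$ yields $2^\aleph$ classes'' requires $2^\aleph>\cc$, which Cantor's theorem gives for $\aleph=\cc$ but which for $\aleph_0<\aleph<\cc$ (e.g.\ $\aleph=\aleph_1$ under $2^{\aleph_0}=2^{\aleph_1}$) is not a theorem of ZFC, so the conclusion for intermediate cardinals is only as secure as that extra hypothesis.
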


Our next corollary is clear since each Banach space has a subgroup isomorphic as a topological group   to $\RR$. The importance of closed totally disconnected subgroups of Banach is well known, see \cite{Ancel, Sidney}.

\begin{corollary}\label{4.13} Let   the separable Banach space $E$ be finite or infinite dimensional.  For each cardinal $\aleph$ with $\aleph_0<  \aleph\le\cc$,  $E$ has 
$2^\aleph$ transcendental subgroups no two of which are isomorphic as topological groups or even homeomorphic.\qed \end{corollary}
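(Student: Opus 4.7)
The plan is to reduce the corollary directly to Theorem~\ref{4.12} by transporting the structure from $\RR$ into $E$ along a topological embedding. Since $E$ is a nontrivial separable Banach space, pick any nonzero $v\in E$ and define $\phi\colon\RR\to E$ by $\phi(t)=tv$. This is a continuous injective homomorphism whose image is a one-dimensional, hence closed, real linear subspace of $E$; by linear algebra and continuity it is a topological group isomorphism of $\RR$ onto its image $\phi(\RR)\le E$.

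Next, invoke Theorem~\ref{4.12} for the group $\RR$: there is a family $\{G_\lambda:\lambda\in\Lambda\}$ of $2^\aleph$ transcendental subgroups of $\RR$, each of cardinality $\aleph$, with the property that no two are isomorphic as topological groups and no two are even homeomorphic. Consider the family $\{\phi(G_\lambda):\lambda\in\Lambda\}$ of subgroups of $E$. Each $\phi(G_\lambda)$ is topologically isomorphic (via the restriction of $\phi$) to the transcendental group $G_\lambda\subseteq \CC$, and so qualifies as a transcendental subgroup of $E$ in the sense of the corollary (that is, a subgroup of $E$ topologically isomorphic to a transcendental group in the sense of Definition~\ref{1.2}).

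It remains to verify that distinct members of $\{\phi(G_\lambda)\}$ are pairwise non-homeomorphic and hence pairwise non-isomorphic as topological groups. Since $\phi$ restricts to a homeomorphism $G_\lambda\to\phi(G_\lambda)$ for every $\lambda$, any homeomorphism (respectively, topological group isomorphism) between $\phi(G_\lambda)$ and $\phi(G_\mu)$ would conjugate to a homeomorphism (respectively, topological group isomorphism) between $G_\lambda$ and $G_\mu$, contradicting Theorem~\ref{4.12} when $\lambda\ne\mu$. Hence the $2^\aleph$ subgroups $\phi(G_\lambda)$ witness the claim.

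There is no real obstacle here; the argument is essentially a transport-of-structure. The only point that needs to be stated cleanly is the interpretation of a transcendental subgroup of $E$ as a subgroup of $E$ topologically isomorphic to a transcendental subgroup of $\CC$, which is the implicit convention needed to make the conclusion meaningful outside $\CC$ itself.
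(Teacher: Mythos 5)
Your argument is correct and is exactly the one the paper intends: the sentence preceding the corollary justifies it by noting that every (nontrivial) Banach space contains a subgroup topologically isomorphic to $\RR$, and then Theorem~\ref{4.12} is transported along that embedding, just as you do with $t\mapsto tv$. Your added remark that a ``transcendental subgroup'' of $E$ must be read as a subgroup topologically isomorphic to a transcendental subgroup of $\CC$ is a useful clarification the paper leaves implicit.
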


\noindent \bf Acknowledgment.  \rm Special thanks go to Karl Heinrich Hofmann and Jan van Mill for their insightful comments on an earlier draft of this paper which resulted in a much improved paper. Thanks also to Wayne Lewis for suggesting possibilities for future research on this topic.

\ssk

\noindent\bf Dedication. \rm This paper is dedicated to  Ralph Kopperman who was a friend and coauthor. He visited me in  Australia a number  of times and I visited him in New York. I enjoyed doing research with him. We discussed coauthoring a book but we were both too strong-minded for that to be a success. He worked tirelessly for the Summer Topology Conference.  My fondest memory is of his visiting me in Armidale, New South Wales a regional Australian city of 25,000 people hundreds of miles from any large city and his thoroughly enjoying the wonderful Southern Hemisphere night sky. He was particularly impressed with the  Clouds of Magellan (two dwarf galaxies visible in the Southern  Hemisphere sky).

\bibliographystyle{plain}

\end{document}